\def\spo{\mathring{{\rm s}}}
\newtheorem{theorem}{Theorem}[section]
\newtheorem{corollary}[theorem]{Corollary}
\newtheorem{lemma}[theorem]{Lemma}
\newtheorem{definition}[theorem]{Definition}
\newtheorem{example}[theorem]{Example}
\newcommand{\fcolon}{%
  \mathrel{\mathpalette\fcolon@\relax}%
}
\newcommand{\fcolon@}[2]{%
  \sbox\z@{$\m@th#1:$}%
  \vbox to\ht\z@{%
    \hbox{$\m@th#1.$}%
    \vss
    \hbox{$\m@th#1.$}%
    \vss
    \hbox{$\m@th#1.$}%
  }%
}
\title{\bf Slow Coloring of \(3k\)-connected Graphs}
\author{{\Large Joan M. Morris \hspace{6mm} Gregory J. Puleo}\\ Department of Mathematics and Statistics \\ 221 Parker Hall, Auburn University \\ Auburn, AL 36849\\
{\tt jmh0127@auburn.edu \hspace{6mm} gjp0007@auburn.edu}}
\date{}
\begin{document}
\nocite{*}

\maketitle

\begin{abstract}
The \textit{slow coloring game} was introduced by Mahoney, Puleo, and West and it is played by two players, Lister and Painter, on a graph \(G\). In round \(i\), Lister marks a nonempty subset \(M\) of \(V(G)\). By doing this he scores \(|M|\) points. Painter responds by deleting a maximal independent subset of \(M\). This process continues until all vertices are deleted. Lister aims to maximize the score, while Painter aims to minimize it. The best score that both players can guarantee is called the \textit{slow coloring number} or \textit{sum-color cost} of \(G\), denoted \(\spo{(G)}\). 

Puleo and West found that for an \(n\)-vertex tree \(T\), the slow coloring number is at most \(\lfloor \frac{3n}{2} \rfloor\), and that the maximum can be reached when \(T\) contains a spanning forest with vertices of degree 1 or 3. 
This implies that every n-vertex graph \(G\) having a perfect matching satisfies \(\spo(G) \geq \lfloor{\frac{3n}{2}}\rfloor\). 
In this paper, we prove that for \(3k\)-connected graphs with \(|V(G)| \geq 4k\) and with a perfect matching the lower bound is higher:  \(\spo(G) \geq \frac{3n}{2} + k\).
\end{abstract}

\section{Introduction}
\label{S:1}

The \textit{slow coloring game} \cite{MPWSpo} is played between two players, Lister and Painter, on a graph \(G\).  In each round of the game, Lister marks a nonempty subset of the graph, which we'll call \(M\), and scores \(|M|\) points. Painter then chooses a maximal independent subset of \(M\) to delete. This process continues until all the vertices are deleted. Lister seeks to maximize the score and Painter seeks to minimize it. The best score that each player can guarantee is called the slow coloring number, or sum-color cost of \(G\), \(\spo(G)\). 
Slow coloring, also called online sum-paintability, is a recent problem that comes from a history of coloring parameters. These other variations of coloring can help us to better understand the slow-coloring game and sum-color cost. A proper coloring of a graph, \(G\), is an assignment of colors to the vertices of \(G\) such that adjacent vertices must get distinct colors. List coloring, introduced independently by Erdos-Rubin-Taylor \cite{ERT} and Vizing \cite{vizing-list, tuza-survey}, gives the graph a list assignment \(L\), such that each vertex \(v\) receives a list of \(L(v)\) available colors.

\begin{definition}
A graph, \(G\), is \textit{\(L\)-colorable} if it has a proper vertex coloring using the colors from the lists assigned by \(L(v)\).
\(G\) is \textit{ \(f\)-choosable} for a function \(f: V(G) \rightarrow \mathbb{N}\), if it is \(L\)-colorable for every list assignment \(L\) such that \(|L(v)| \geq f(v)\) for all \(v\). \(G\) is \textit{\(k\)-choosable} for an integer \(k\) if it is \(f\)-choosable when \(f(v)= k\) for all \(v\).  
\end{definition}

One can also consider other measures to assess a list coloring, such as the least sum or the average of the list sizes. For example, the sum-choosability, \(\chi_{SC}(G)\), introduced by Isaak is defined to be the minimum \(\sum(f(v))\) over all \(f\) such that \(G\) is \(f\)-choosable. \cite{Isaak}

We can modify the definition of list coloring by introducing an online factor: reveal the lists of vertices as a function of time: as time passes, more information is revealed. We can view this as a game with the same players and round structure described in relation to slow coloring. In round \(i\), Lister marks a subset of \(M\) vertices of the graph. We can view this marking as revealing all of the vertices with color \(i\) in their lists. Painter then chooses an independent subset of \(M\) to receive color \(i\). In comparison with slow coloring however, we score this game differently: Lister tries to maximize the amount of times that a vertex is chosen, thus revealing the entire hidden list before Painter colors it. 

We note here the connection to choosability, where we're concerned with the largest list on any vertex. For a function \(f\) determining the list sizes for the vertices, Lister wins if some vertex \(v\) is marked more than \(f(v)\) times. Painter wins by coloring all the vertices before this happens. Thus, Painter wins the \(f\)-painting game by preventing a vertex \(v\) from being marked more than \(f(v)\) times. If Painter can win, then the graph is said to be \(f\)-paintable.  A graph is \(k\)-paintable if it is \(f\)-paintable for the function \(f(v) = k\) for all \(v\), and the \textit{paintability} of a graph is the least such \(k\). Paintability was independently introduced by Schauz \cite{SchauzPaintability} and Zhu \cite{ZhuPaintability} Just as in choosability, we can study the least sum, or average, of this property-- the sum-paintability of \(G\), studied first by Carraher, Mahoney, Puleo, and West \cite{CMPW2013}. Denoted \(\chi_{SP}(G)\), the sum-paintability of a graph \(G\) is the least value of \(\sum(f(v))\) such that \(G\) is \(f\)-paintable.

To clarify this process, consider an example on a cycle of length 5, denoted \(C_5\), with vertices labeled \(v_1, \dots, v_5\).

\begin{example}

We define a function \(f\) such that for each \(v \in V(C_5)\), the size of \(v\)'s list is \(f(v) = 2\). 

 \begin{center}
    \includegraphics[scale=0.3]{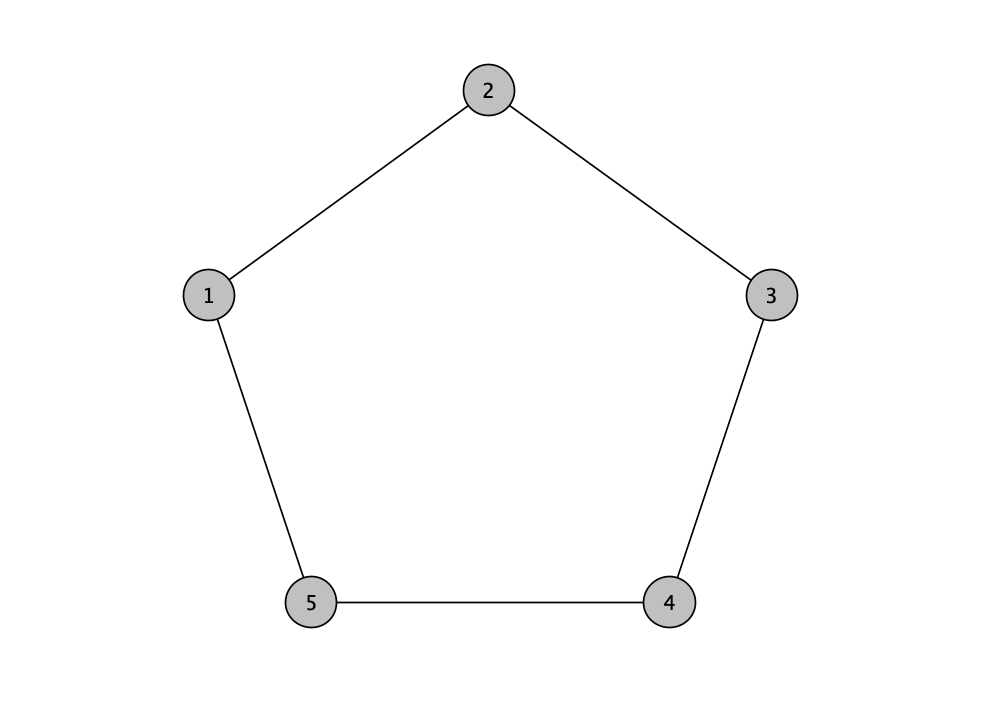}
    \label{C5}
    \end{center}

In the first move, Lister selects all vertices of \(C_5\), \(v_1, \dots, v_5\), revealing that they have color \(1\) in their lists. Painter responds by selecting an independent set to color as color \(1\). Without loss of generality we'll say he chooses \(v_1\) and \(v_3\). In the second round, Lister chooses \(v_4, v_5\), revealing color \(2\) in their lists. Since these two vertices are adjacent, Painter can only select one. Say he chooses \(v_5\) and gives it color \(2\). In the last round, Lister chooses \(v_2, v_5\), revealing color \(3\) in their lists. This forces \(v_5\) to have the list \(L(v_5) = \{1, 2, 3\}\). Since \(|L(v)| > 2 = f(v_5)\), this shows that \(C_5\) is not \(2\)-paintable. Executing the same process with \(f(v) = 3\) for all \(v\) shows that \(C_5\) is \(3\)-paintable.

\begin{figure}[htbp]
    \centering
    \includegraphics[scale = 0.3]{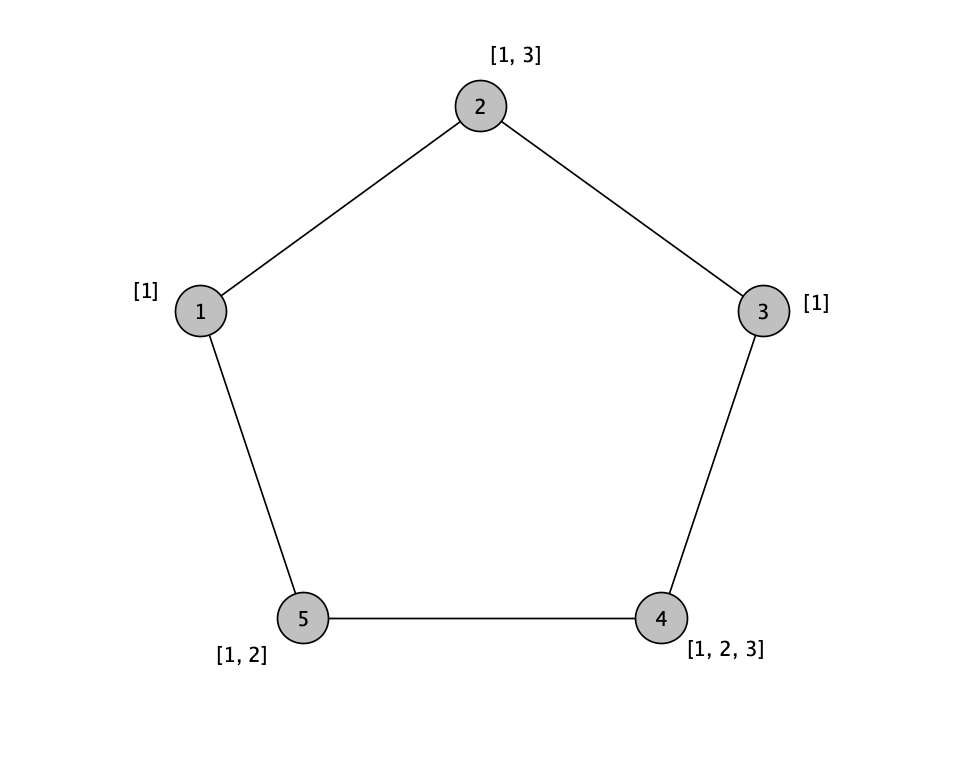}
    \caption{\(C_5\) with revealed lists}
    \label{C5Lists}
\end{figure}

\end{example}

Since paintability deals with how many times a vertex is marked, and the size of its list, we are not focused on the specific colors assigned to the vertices. Hence, as observed in \cite{CMPW2013}, we can view paintability in the following way: Painter allots tokens to the vertices of \(G\), according to a function\(f(v)\), corresponding to the size of their lists. Every time Lister marks a vertex, a token is removed. When all the tokens of a vertex have been used, then the vertex has been marked \(f(v)\) times. If a vertex is marked more than \(f(v)\) times (having no tokens left to \say{pay}), Lister wins the game. In this fashion, sum-paintability is the least amount of total tokens used. If we continue in this direction and consider  sum-paintability in an online progression, we arrive at slow coloring. 

In slow coloring, rather than assigning tokens beforehand according to \(f\), Painter can distribute tokens to the vertices as the game progresses. This allows Painter to reserve tokens, and use them as needed, perhaps on especially difficult vertices. Thus, we can see that \(\spo(G) \leq  \chi_{SP}(G)\), since Painter can always play according to the function defined by \(\chi_{SP}(G)\). Here again, we can see that the specific color of the vertices marked does not affect the parameter, and for each round \(i\), we can use a different color \(i\). Thus deleting a vertex in round \(i\), as discussed earlier, is a model for assigning it color \(i\).

\subsection{History}

Slow coloring was first introduced by Mahoney, Puleo, and West in 2017  \cite{MPWSpo}. They provided a general upper and lower bound for the slow coloring number \(\spo\), which is dependent on the graph's number of vertices and its independence number, which is the largest subset of vertices in \(G\) such that none of the vertices are adjacent: 

\[
\displaystyle \frac{|V(G)|}{2\alpha(G)} + \frac{1}{2} \leq \displaystyle \frac{\spo(G)}{|V(G)|} \leq \max \left \{\frac{|V(H)|}{\alpha(H)} \colon H \subset G \right\}
\]

They also produced results for specific cases, including for graphs with independence number  two, for \(n\)-vertex trees and for complete bipartite graphs. In 2018, Gutowski, et al, \cite{GSpoSparse} studied the slow coloring number of several classes of sparse graphs including \(k\)-degenerate, acyclically \(k\)-colorable, planar, and outerplanar graphs. 

Around the same time, Puleo and West \cite{PWTrees} published results studying slow coloring on trees. They developed an algorithm to compute the slow coloring number for a tree and produced results characterizing \(n\)-vertex trees with the largest and smallest values.  They proved two theorems in particular, the second of which will be useful for our results: 

\begin{theorem} \label{Puleo1} \cite{PWTrees}
For every \(n\)-vertex tree \(T\), 

\[
n + \sqrt{2n} \approx n + u_{n-1} = \spo(K_{1, n-1}) \leq \spo(T) \leq \spo(P_n) = \lfloor \frac{3n}{2} \rfloor
\]
where \(u_r = \max \{k : t_k \geq r \} \)  for \(t_k = \binom{k+1}{2} \), \(k, r \in \mathbb{N}\).

\end{theorem}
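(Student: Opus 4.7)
The theorem packages four claims: $\spo(P_n)=\lfloor 3n/2\rfloor$ (both bounds), $\spo(K_{1,n-1})=n+u_{n-1}$ (both bounds), and that among $n$-vertex trees the path maximizes $\spo$ while the star minimizes it. My plan is to establish the two explicit values first and then tackle the general tree inequalities.

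For the path value, the lower bound uses the blunt Lister strategy ``mark every remaining vertex.'' In the first round Lister scores $n$; since $\alpha(P_n)=\lceil n/2\rceil$, Painter deletes at most $\lceil n/2\rceil$ vertices, and the $\lfloor n/2\rfloor$ survivors form an independent set that Lister marks all at once for a final $\lfloor n/2\rfloor$ points. The matching upper bound $\spo(T)\le\lfloor 3n/2\rfloor$ for every tree is harder; I would pursue a charging argument in which each vertex carries potential $3/2$ and Painter, in each round, chooses a maximal independent set $I\subseteq M$ with $|M|\le\tfrac{3}{2}|I|$, handling the isolated-edge configuration (where only half the marked vertices can be deleted) by borrowing potential saved in earlier rounds.

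For the star, let $f(m)=\spo(K_{1,m})$. Painter has essentially two relevant responses when Lister marks the center together with $k$ leaves (delete the center, or delete the $k$ leaves), yielding the recurrence
\[
f(m)=\max_{0\le k\le m}\min\bigl(m+k+1,\ k+1+f(m-k)\bigr).
\]
Induction on $m$ shows $f(m)=m+1+u_m$, with the min's balance point lying at consecutive triangular numbers, so that $u_m\sim\sqrt{2m}$.

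The hardest piece is the global lower bound $\spo(T)\ge\spo(K_{1,n-1})$ for every tree $T$. Lister cannot naively replay his star strategy, since a ``leaf'' in $T$ may have extra neighbors that alter Painter's options. I would proceed by induction on $n$: pick a leaf $\ell$ of $T$ with neighbor $v$ and either (i) run a star-like attack centered at $v$, using $\ell$ together with a well-chosen collection of vertices as expendable ``leaves,'' or (ii) reduce to $T-\ell$ and combine the inductive bound with a small additive gain of $u_{n-1}-u_{n-2}$. Producing a clean reduction that works uniformly across all tree shapes---particularly long paths, which look nothing like a star locally---is the main obstacle, and is likely where the algorithmic heart of Puleo--West's proof lies.
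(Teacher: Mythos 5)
This statement is Theorem~1.2 of Puleo--West \cite{PWTrees}, quoted here without proof, so there is no in-paper argument to compare against; I can only assess your proposal on its own terms. You have correctly decomposed the theorem into its four sub-claims, and two of them are in good shape in outline: the path lower bound is really the general inequality \(\spo(G)\ge 2n-\alpha(G)\) (every vertex is marked at least once, and every vertex surviving round one is marked at least once more), which for \(P_n\) gives \(n+\lfloor n/2\rfloor=\lfloor 3n/2\rfloor\) --- though note the survivors need not form an independent set as you assert; that is simply not needed, and non-independence only helps Lister. Your star recurrence \(f(m)=\max_k\min(m+k+1,\;k+1+f(m-k))\) is the right one, and checks out on small cases (e.g.\ \(f(1)=3\), \(f(2)=4\)); the stated closed form requires \(u_r=\max\{k:t_k\le r\}\), and the \(\ge\) in the theorem as printed appears to be a typo.

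The genuine gaps are the two claims that carry the content of the theorem, and you have left both as acknowledged sketches rather than proofs. For the upper bound \(\spo(T)\le\lfloor 3n/2\rfloor\), a per-round charge of \(3/2\) per deleted vertex fails outright when Lister marks a perfect matching (Painter can only delete half the marked vertices), so the ``borrowing potential from earlier rounds'' step is not a detail but the entire difficulty; you would need an explicit amortized invariant (or the inductive component analysis of \cite{MPWSpo}/\cite{PWTrees}) and none is supplied. For the lower bound \(\spo(T)\ge\spo(K_{1,n-1})\), you explicitly state that you do not see how to make the reduction work uniformly --- and indeed a leaf-deletion induction of the form you describe does not obviously produce the additive gain \(u_{n-1}-u_{n-2}\), since that quantity is usually \(0\) and the argument must instead track when it jumps; Puleo and West handle this with a tree-specific algorithmic/value-function machinery that your outline does not reconstruct. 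As it stands the proposal establishes \(\spo(P_n)\ge\lfloor 3n/2\rfloor\) and the star recurrence, but not the two inequalities relating a general tree to the path and the star.
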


\begin{theorem} \label{PuleoSP} \cite{PWTrees}
If \(T\) is an n-vertex forest, then \(\spo(T) = \lfloor \frac{3n}{2} \rfloor\) if and only if \(T\) contains a spanning forest in which every vertex has degree 1 or 3, except for one vertex of degree 0 or 6 when \(n\) is odd. 
\end{theorem}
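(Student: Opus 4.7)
The plan is to prove both directions of the biconditional, relying on Theorem~\ref{Puleo1} for the upper bound $\spo(T) \leq \lfloor 3n/2 \rfloor$.

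For the sufficiency direction, suppose $T$ contains a spanning forest $F$ of the described form. I would first reduce to showing $\spo(F) \geq \lfloor 3n/2 \rfloor$. Two observations make this reduction clean: the parameter $\spo$ is monotone in edges, since adding edges only restricts Painter's choices of maximal independent subsets of the marked set, so $\spo(F) \leq \spo(T)$; and $\spo$ is additive over the connected components of a disjoint union, since the games on disjoint components decouple. Thus it suffices to show $\spo(F_i) \geq \lfloor 3|F_i|/2 \rfloor$ for each component $F_i$ of $F$, including the exceptional component in the odd case. The base building blocks are $K_2$ (Lister marks both endpoints, then the survivor, for $3$ points on $2$ vertices) and $K_{1,3}$ (Lister marks the center together with two of the three leaves; each of Painter's two maximal independent responses --- the center alone, or the two marked leaves --- leaves a $3$-vertex configuration worth exactly $3$ more, for $6$ total). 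For a larger component in which every vertex has degree $1$ or $3$, I would proceed by induction, locating a terminal $K_{1,3}$ substructure by a longest-path argument (the endpoint's neighbor has degree $3$, and its remaining neighbor along any ``sibling'' branch must be a leaf), peeling off those four vertices to gain $6$ points, and then verifying that the residual graph can be augmented into a spanning forest of the correct form. The subtle step is handling degrees at the boundary, where the degree-$3$ vertex adjacent to the peeled $K_{1,3}$ may become degree-$2$ and must be re-absorbed into the factor.

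For the necessity direction, I would argue the contrapositive: if $T$ does not admit such a spanning forest, then Painter has a strategy keeping the score strictly below $\lfloor 3n/2 \rfloor$. This first requires a purely structural characterization of when a forest admits the prescribed spanning subgraph --- a degree-constrained factor problem, most naturally handled via a Tutte--Berge or Lov\'asz deficiency-type condition specialized to $\{1,3\}$-factors in trees. Each obstruction to the factor should then translate into a vertex set on which Painter has ``slack'' and can save at least one mark via an adaptive strategy that mirrors the obstruction.

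The hard part is the necessity direction: both the structural characterization of forests without the desired spanning forest, and the translation of each obstruction into a concrete Painter strategy, involve delicate case analysis. The sufficiency direction is cleaner but already nontrivial, since the naive ``peel a matching edge'' approach fails to capture the $\tfrac{3}{2}$-ratio on configurations like $K_{1,3}$ (which has no perfect matching yet achieves $\spo = 6$), so the proof must combine the $K_{1,3}$ strategy with the global tree structure in a consistent induction.
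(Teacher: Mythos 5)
This statement is not proved in the paper at all: it is quoted verbatim from Puleo and West \cite{PWTrees} as background, so there is no internal proof to compare your attempt against. Judged on its own terms, your proposal is an outline with two genuine gaps, one in each direction. In the sufficiency direction, the terminal-claw induction does not close as described. Suppose Lister marks the leaf $v_0$, its sibling leaf $u$, and their common degree-$3$ neighbour $v_1$ on a longest path. If Painter deletes $\{v_0,u\}$ the induction goes through, but if Painter deletes $\{v_1\}$ the remainder is two isolated vertices plus a tree $T''$ on $m-3$ vertices (odd order) in which $v_2$ now has degree $2$; the point count forces $T''$ to be \emph{exactly} extremal, i.e.\ to again contain a spanning forest of the prescribed type. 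When both remaining neighbours of $v_2$ have degree $3$, deleting an edge at $v_2$ creates a new degree-$2$ vertex and the repair cascades; this is precisely the step you label ``subtle'' and then do not carry out. You also never treat the exceptional component in the odd case (your only building blocks are $K_2$ and $K_{1,3}$, so the degree-$0$ and especially the degree-$6$ component are unaddressed, though the latter is needed to make $\sum_i \lfloor 3n_i/2\rfloor$ match $\lfloor 3n/2\rfloor$).

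The necessity direction is where the real work lies, and your proposal contains no argument for it: ``find a Tutte--Berge-type characterization of forests without a $\{1,3\}$-factor and translate each obstruction into a Painter strategy'' is a research plan, not a proof, and it is not how the result is actually established. Puleo and West prove Theorem~\ref{PuleoSP} by first deriving an exact recursive formula (an algorithm computing $\spo(T)$ for every tree, as noted in the paper's own summary of \cite{PWTrees}) and then analyzing when that formula attains its maximum value $\lfloor 3n/2\rfloor$; the spanning-forest characterization falls out of that analysis rather than being the starting point. If you want a complete proof you should either reconstruct that formula or find a direct Painter strategy beating $\lfloor 3n/2\rfloor$ on any forest lacking the factor, and at present you have neither.
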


A natural corollary of this follows by using the fact that any graph with a perfect matching has one of these spanning forests as a subgraph. Since more edges would only push the sum-color cost higher, this becomes a lower bound for classes of graphs with a perfect matching. If \(G\) is a graph with a perfect matching and  \(|V(G)| =n\), we have \(\spo(G_\sigma) \geq \frac{3n}{2}\).

\subsection{Main Result}

In this paper, we use a Lister strategy to guarantee a higher bound for sufficiently large \(3k\)-connected graphs with perfect matchings: 

\begin{theorem} \label{MainResult1}
Let \(G\) be a \(3k\)-connected graph with \(|V(G)| \geq 4k\) and with a perfect matching. Then the slow coloring number of \(G\) is bounded from below by
\[
\spo(G_\sigma) \geq \displaystyle \frac{3n}{2} +k
\]
\end{theorem}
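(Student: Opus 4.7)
My plan is to construct an explicit Lister strategy that scores at least $\frac{3n}{2}+k$ on $G$; by definition this yields $\spo(G)\geq \frac{3n}{2}+k$. The baseline of $\frac{3n}{2}$ is already in hand from the perfect matching: Lister plays the matching edge-by-edge, marking the two endpoints of a matched edge in one round (scoring $2$, forcing Painter to delete exactly one endpoint), then marking the survivor in the next round (scoring $1$ more), for $3$ points per matched edge. The substance of the argument is in using the $3k$-connectivity to squeeze out $k$ additional points.

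My approach is to locate $k$ vertex-disjoint ``bonus gadgets'' in $G$: small subgraphs on which Lister plays a short scripted sequence of rounds that scores one point more than the matching baseline on that vertex set, no matter how Painter responds. Natural candidates include the paw (a triangle with a pendant) and the diamond $K_4-e$, each on $4$ vertices with $\spo = 7 = \frac{3\cdot 4}{2}+1$; and, in bipartite-heavy portions of $G$ where no triangle is available (as in $K_{3k,3k}$), an induced $K_{3,3}$ on $6$ vertices with $\spo = 10 = \frac{3\cdot 6}{2}+1$. Lister first executes the gadget phases, harvesting $k$ extra points in total, then runs the baseline matching strategy on the remaining vertices.

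The central technical task --- and the principal obstacle --- is a structural lemma: every $3k$-connected graph on at least $4k$ vertices with a perfect matching contains $k$ vertex-disjoint bonus gadgets whose removal leaves a subgraph that still has a perfect matching. I would attack this by induction on $k$. For the inductive step, the bound $\delta(G)\geq 3k$ together with Menger's theorem should suffice to extract one gadget; we would then need $G - V(\text{gadget})$ to be $3(k-1)$-connected with at least $4(k-1)$ vertices and still to have a perfect matching --- possibly after re-routing the matching via an augmenting path, justified by the Tutte--Berge formula. The connectivity arithmetic is tight, and the hypothesis $|V(G)|\geq 4k$ is exactly what sustains the recursion down to the base case.

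I expect the most delicate part of the argument to be the gadget-extraction lemma in the bipartite regime, where the absence of triangles rules out the economical $4$-vertex gadgets and forces the $6$-vertex $K_{3,3}$ gadget. Reconciling the larger vertex cost of the bipartite gadget with the $4k$-vertex budget --- perhaps by selecting the gadget type adaptively based on the local structure --- is where the core technical work is likely to lie.
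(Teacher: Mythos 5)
Your overall accounting is sound as far as it goes: superadditivity of $\spo$ over vertex-disjoint induced subgraphs, plus the $\lfloor 3n/2\rfloor$ value on matched pairs, would indeed convert $k$ disjoint ``$+1$'' gadgets into the claimed bound. But the entire weight of the proof rests on the gadget-extraction lemma, which you have not proved, and the induction you sketch for it does not close. Removing a $4$-vertex gadget from a $3k$-connected graph leaves only a $(3k-4)$-connected graph, and $3k-4 < 3(k-1)$, so the inductive hypothesis is not available at the next step; a $6$-vertex $K_{3,3}$ gadget costs $6$ units of connectivity, which is even worse. The claim that ``the connectivity arithmetic is tight'' is therefore backwards: it is short by one unit per $4$-vertex gadget and by three per $6$-vertex gadget. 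You would also need to show that the complement of the chosen gadgets retains a perfect matching, and invoking the Tutte--Berge formula ``to re-route via an augmenting path'' is not an argument --- deleting four vertices that are not a union of two matching edges can destroy the perfect matching irreparably (consider deleting a gadget that isolates an odd component). Finally, the gadget values themselves ($\spo$ of the paw, diamond, and $K_{3,3}$) are asserted rather than verified, though these at least are routine to check.

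For contrast, the paper avoids all of this by spending the connectivity \emph{after} Painter moves rather than before. Lister opens with a single round marking $2k$ matching pairs ($4k$ vertices, scoring $4k$); Painter's deletion is an independent set $D$ with $|D|\le 2k$, so $G^-=G-D$ is still $k$-connected. The matching partners of the deleted vertices (at most $2k$ of them) are paired up by vertex-disjoint paths obtained from Menger's theorem, and the symmetric difference of these paths with the surviving matching edges is a spanning subgraph of $G^-$ in which every vertex has degree $1$ or $3$; deleting cycles preserves odd degrees and yields a spanning forest to which Theorem~\ref{PuleoSP} applies, giving $\spo(G^-)\ge \tfrac32(n-|D|)\ge \tfrac32 n-3k$ and hence $\spo(G)\ge 4k+\tfrac32 n-3k=\tfrac32 n+k$. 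This uses the $3k$-connectivity exactly once, in a place where the arithmetic does work out ($|D\cup S|\le 2k+(k-1)<3k$), and it never needs to preserve a perfect matching on a residual graph. If you want to salvage your gadget approach, the missing structural lemma is the real theorem you would have to prove, and I do not see how to prove it without essentially rediscovering an argument like the paper's.
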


This bound is sharp in the \(k=1\) case, and cannot be sharp in the \(k>1\) case. 

\vspace{0.5in}

\section{Main Results}
\label{S:2}

Throughout this section let \(G\) be a \(3k\)-connected graph with \(|V(G)| \geq 4k\) and with a set of edges, \(P\), which induces a perfect matching. Lister begins by choosing \(4k\) vertices using \(2k\) pairs in \(P\).  Painter replies by deleting an independent subset of the marked vertices, \(D\). Note that \(|D| \leq 2k\), since \(D\) contains most one vertex in each pair in \(P\). Let \(G^- = G - D\).

\begin{lemma} \label{kconlem}
\(G^-\) is \(k\)-connected. 
\end{lemma}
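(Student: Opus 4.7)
The plan is to invoke the elementary fact that deleting at most $t$ vertices from an $m$-connected graph produces a graph whose connectivity is at least $m - t$, provided enough vertices remain. Here $G$ is $3k$-connected and $|D| \leq 2k$, so we expect $G^-$ to be $(3k-2k)$-connected, that is, $k$-connected. There are essentially two items to verify under the standard definition of $k$-connectivity: the vertex-count condition $|V(G^-)| \geq k+1$, and the separator condition that no set of fewer than $k$ vertices disconnects $G^-$.

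For the vertex-count condition, I would simply note that $|V(G^-)| = |V(G)| - |D| \geq 4k - 2k = 2k$, and $2k \geq k+1$ whenever $k \geq 1$; this is precisely where the hypothesis $|V(G)| \geq 4k$ is used. For the separator condition, I would take an arbitrary $S \subseteq V(G^-)$ with $|S| < k$ and rewrite $G^- - S = G - (D \cup S)$. Then $|D \cup S| \leq |D| + |S| < 2k + k = 3k$, so by the $3k$-connectivity of $G$, the graph $G - (D \cup S)$ is connected. Hence $G^-$ cannot be separated by fewer than $k$ vertices.

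I do not anticipate any real obstacle: the argument is a direct unpacking of definitions, with the role of the hypothesis $|V(G)| \geq 4k$ being solely to guarantee that the residual graph $G^-$ is not so small that $k$-connectivity becomes vacuous or ill-defined. The only sanity check worth making is that both bounds $|D| \leq 2k$ and $|V(G)| \geq 4k$ are used with matching constants, which they are.
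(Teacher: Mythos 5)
Your proposal is correct and follows essentially the same argument as the paper: take $S$ with $|S| \leq k-1$, observe $|D \cup S| \leq 3k-1$, and invoke the $3k$-connectivity of $G$ to conclude $G^- - S = G - (D \cup S)$ is connected. The only difference is that you also verify the vertex-count condition $|V(G^-)| \geq k+1$, which the paper omits but which is a harmless (indeed slightly more careful) addition.
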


\begin{proof}
We need to show that given any \(S \subset V(G^-)\) with \(|S| \leq k-1\), \(G^- - S\) is connected. Let \(D^* = D \cup S\); then \(|D^*| \leq 2k + (k - 1) = 3k - 1\). Since \(G\) is \(3k\)-connected, \(G - D^*\) is connected. Since 
\[
G - D^* = G - D - S = G^- - S,
\]

\(G^- - S\) is connected so \(G^-\) is \(k\)-connected.
\end{proof}

Consider the vertices in \(G\) that are joined to vertices in \(D\) by matching edges; call them \(\beta\)-vertices and the set of such vertices \(D'\). So \(G^-\) contains \(|D| = |D'| \leq 2k\) \(\beta\)-vertices. We consider separately the cases when \(|D'|\) is even and when \(|D'|\) is odd.

For two sets \(A, B\) the \textit{symmetric difference} of \(A\) and \(B\) is defined to be the set of elements, each of which is in exactly one of \(A, B\); denoted by \(A \oplus B \).

\begin{lemma} \label{3kspan}
If \(|D'|= 2k^*\) , then \(\spo(G^-) \geq \frac{3}{2}n-3k^*\). 
\end{lemma}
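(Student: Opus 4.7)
The plan is to invoke Theorem~\ref{PuleoSP} by exhibiting a spanning forest $F$ of $G^-$ in which every vertex has degree $1$ or $3$. Write $n^- := n - 2k^*$ for the number of vertices of $G^-$. Since $G$ has a perfect matching $n$ is even, and because $2k^*$ is even so is $n^-$; therefore the parity exception in Theorem~\ref{PuleoSP} does not arise, and such an $F$ satisfies $\spo(F) = 3n^-/2 = \tfrac{3n}{2} - 3k^*$. Because $\spo$ is monotone under passing to spanning subgraphs (Painter has strictly more independent sets available in the sparser graph), this yields $\spo(G^-) \geq \spo(F) = \tfrac{3n}{2} - 3k^*$, as required.

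I would construct $F$ in two stages. First include the partial matching $M^- := P \cap E(G^-)$. Since the only edges of $P$ destroyed in passing to $G^-$ are those with an endpoint in $D$, this $M^-$ covers every non-$\beta$-vertex exactly once and leaves the $2k^*$ $\beta$-vertices of $B := D'$ isolated; at this stage every non-$\beta$-vertex has degree $1$ and every $\beta$-vertex has degree $0$. Then pair up the $\beta$-vertices into $k^*$ pairs and absorb each pair $\{\beta,\beta'\}$ with one of two local gadgets. If $\beta\beta' \in E(G^-)$, add just the single edge $\beta\beta'$, giving both endpoints degree $1$. Otherwise, find non-$\beta$ vertices $x \in N_{G^-}(\beta)$ and $x' \in N_{G^-}(\beta')$ with $xx' \in E(G^-)$, and add the three edges $\beta x$, $xx'$, $x'\beta'$; this promotes $x$ and $x'$ from degree $1$ to degree $3$, while $\beta, \beta'$ and the matching partners of $x, x'$ all sit at degree $1$. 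Because every newly added edge attaches only to a vertex that was previously a leaf of $M^-$, no cycle is created and $F$ remains a forest.

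The main obstacle is realizing these gadgets simultaneously across all $k^*$ pairs so that the vertices $x, x'$ of different gadgets are mutually disjoint and distinct from every $\beta$-vertex. This is where Lemma~\ref{kconlem} enters: the $k$-connectivity of $G^-$ forces $\delta(G^-) \geq k$, and combined with the cardinality bound $|B| = 2k^* \leq 2k$ and the ambient bound $|V(G^-)| = n - 2k^* \geq 2k$ (from $n \geq 4k$), there is enough room to choose the gadgets disjointly. The delicate sub-case is when $|B|$ is close to $2k$, because then a single $\beta$-vertex could a priori have all of its $G^-$-neighbors inside $B$; the plan is to handle first all pairs lying inside $E(G^-[B])$ via the single-edge gadget, reducing to a residual set of $\beta$-vertices each of which has at least one non-$\beta$-neighbor in $G^-$, and then to use a Menger-type or greedy selection procedure to produce the second-type gadgets disjointly on the remaining non-$\beta$ vertices. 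I expect verifying this global vertex-disjointness condition to be the hardest part of the proof; once it is established, the monotonicity-plus-Theorem~\ref{PuleoSP} argument above closes the lemma immediately.
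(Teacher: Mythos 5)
Your overall skeleton---exhibit a spanning forest of \(G^-\) in which every vertex has degree \(1\) or \(3\), apply Theorem~\ref{PuleoSP} to it, and finish by monotonicity of \(\spo\) under adding edges---is exactly the paper's. The gap is in the construction of the forest, and it is genuine on two counts. First, your second gadget assumes that any two paired non-adjacent \(\beta\)-vertices \(\beta,\beta'\) can be joined by a path of length exactly \(3\) whose internal vertices are non-\(\beta\) neighbors \(x\in N_{G^-}(\beta)\) and \(x'\in N_{G^-}(\beta')\) with \(xx'\in E(G^-)\). Nothing guarantees this: \(k\)-connectivity gives no bound on distances (already for \(k=1\), \(G^-\) is merely connected and two \(\beta\)-vertices can be arbitrarily far apart), so the required \(x,x'\) may simply not exist. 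Second, you explicitly defer the simultaneous vertex-disjointness of the \(k^*\) gadgets to ``a Menger-type or greedy selection procedure''; that is not a detail to be checked later but the actual content of the lemma, and a greedy choice can fail when the \(\beta\)-vertices share neighbors. There is also a smaller untreated degeneracy: if \(x\) and \(x'\) happen to be matching partners, your gadget leaves \(x\) with degree \(2\).

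The paper repairs all of this at once. It partitions \(D'\) into two halves \(A,B\) of size \(k^*\) and invokes the set-to-set form of Menger's theorem in the \(k\)-connected graph \(G^-\) (Lemma~\ref{kconlem}) to obtain \(k^*\) vertex-disjoint \(A\)--\(B\) paths of \emph{arbitrary} length, and then takes the \emph{symmetric difference} of the union of these paths with the surviving matching edges rather than simply adding edges on top of the matching. Parity then does the bookkeeping automatically: each \(\beta\)-vertex ends with degree \(1\) and every other vertex with degree \(1\) or \(3\), regardless of whether a path edge coincides with a matching edge; deleting cycles preserves oddness of all degrees and yields the desired forest. If you replace your bounded-length gadgets by these Menger paths and your ``add edges'' step by the symmetric difference, your argument becomes the paper's.
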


\begin{proof}

Recall that \(G^-\) contains \(|D'| \leq 2k\) \(\beta\)-vertices. Arbitrarily partition \(D'\) into two sets  \(A\) and \(B\), each of size \(k^*\). By Lemma~\ref{kconlem}, \(G^-\) is \(k\)-connected. Thus, by Menger's Theorem \cite{West} and since \(k^* \leq k\), there exist \(k^*\) vertex-disjoint paths between vertices in \(A\) and vertices in \(B\), say \(P_1, P_2, \dots, P_{k^*}\). Let \(P = P_1 + P_2 + \dots + P_{k^*}.\) 

Construct a spanning forest \(F\) as follows: Let \(F_0\) be the set of all matching edges remaining in \(G^-\).  Define \(F_1\) by: 
\[
F_1 = F_0 \oplus E(P).
\]

The following shows that each vertex in \(F_1\) has degree \(1\) or \(3\). First consider a \(\beta\)-vertex, \(v_\beta\). Since in \(G^-\) \(\beta\)-vertices are not incident to a matching edge, \(d_{F_0}(v_\beta) = 0\). Since they are an endpoint of a path in \(P\), \(d_P(v_\beta) = 1\).  Thus \(d_{F_1}(v_\beta) =1\).

Now consider all the remaining vertices. Let \(v\) be a vertex that is not a \(\beta\)-vertex. Since \(v\) retains its matching edge in \(G^-\), \(d_{F_0}(v) = 1\). Also, \(d_P(v)\) is even: if the vertex is disjoint from \(P\), then \(d_P(v) = 0\); if the vertex belongs to a path in \(P\), then \(d_P(v) = 2\). Since \(P_1, P_2, \dots, P_k\) are vertex-disjoint, \(v\) can belong to at most one such path. Thus for non-\(\beta\)-vertices, \(d_{F_1}(v)\) is either \(1\) or \(3\). Hence every vertex has degree \(1\) or \(3\) in \(F_1\)

Finally, form \(F\) from \(F_1\), by iteratively removing arbitrarily chosen cycles until no cycle remains. Since the removal of any cycle does not affect the fact that all vertices have odd degree, \(F\) is a spanning forest in \(G^-\) with all vertices of degree \(1\) or \(3\) as required. Since \(|V(G^-)| = n-2k^*\), by \cite{PWTrees}, we have 
\[
\spo(G^-) \geq \frac{3}{2}(n-2k^*) \geq \frac{3}{2}n-3k^*.
\]
\end{proof}

\begin{lemma} \label{spoBodd}
If \(|D'| = 2k^*+1\) , then \(\spo(G^-) > \frac{3}{2}n-3k^*.\)
\end{lemma}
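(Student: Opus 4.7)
The plan is to adapt the construction of Lemma~\ref{3kspan}, peeling off one ``leftover'' $\beta$-vertex $v^* \in D'$ and treating it as an isolated vertex in the spanning forest. Since $|D'| = 2k^* + 1 \leq 2k$ forces $k^* \leq k - 1$, Lemma~\ref{kconlem} combined with a single-vertex deletion shows that $G^- - v^*$ is $(k-1)$-connected, hence at least $k^*$-connected. I would partition $D' \setminus \{v^*\}$ into $A$ and $B$ with $|A| = |B| = k^*$, and apply Menger's theorem inside $G^- - v^*$ to obtain $k^*$ vertex-disjoint $A$--$B$ paths $P_1, \ldots, P_{k^*}$, all avoiding $v^*$.

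Let $F_0$ be the surviving matching edges of $G^-$, set $F_1 = F_0 \oplus E(P_1 \cup \cdots \cup P_{k^*})$, and iteratively strip cycles to obtain a spanning forest $F$. The degree argument from Lemma~\ref{3kspan} applies verbatim: every vertex of $D' \setminus \{v^*\}$ ends with degree $1$, every non-$\beta$-vertex has degree $1$ or $3$, and $v^*$ has degree $0$ (no surviving matching edge and not on any path). Since $G$ has a perfect matching, $n$ is even, so $|V(F)| = n - 2k^* - 1$ is odd and the degree-$0$ vertex $v^*$ serves as the single exception permitted by Theorem~\ref{PuleoSP}. That theorem yields $\spo(F) = \lfloor 3|V(F)|/2 \rfloor$, and because adding edges only tightens Painter's independence constraint, $\spo(G^-) \geq \spo(F)$.

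The principal obstacle is strengthening this to the strict inequality $\spo(G^-) > \frac{3}{2}n - 3k^*$: Theorem~\ref{PuleoSP} alone delivers a bound of about $\frac{3}{2}|V(G^-)|$, missing the stated target by a small additive constant. To close the gap, I would exploit the fact that $v^*$ is isolated only within $F$; inside $G^-$ it still has neighbors guaranteed by Lemma~\ref{kconlem}, and Lister should be able to squeeze out the missing points by marking $v^*$ together with one or two carefully chosen neighbors in a supplementary round, forcing Painter to spend more on the $v^*$-component than the pure forest strategy accounts for. Making this extra contribution quantitative and uniform across every admissible $k^*$, while keeping careful track of parity, is where the technical work lies.
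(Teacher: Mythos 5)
Your construction is the right adaptation of Lemma~\ref{3kspan}, and your accounting is accurate: with \(v^*\) as the degree-\(0\) exception permitted by Theorem~\ref{PuleoSP} on the odd-order forest \(F\), you obtain \(\spo(G^-)\geq\lfloor\frac{3}{2}(n-2k^*-1)\rfloor=\frac{3}{2}n-3k^*-2\), which is \(3\) short of the strict inequality claimed. The paper routes around the exception clause differently: it splits \(G^-\) into \(G_1^-=G^--v_\beta\) and the single isolated vertex \(v_\beta\), applies the even-case spanning-forest argument to \(G_1^-\), and invokes the superadditivity \(\spo(G^-)\geq\spo(G_1^-)+\spo(G_2^-)\) from \cite{MPWSpo} to collect \(+1\) for the isolated vertex. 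Since \(\lfloor\frac{3m}{2}\rfloor=\frac{3(m-1)}{2}+1\) for odd \(m\), the two devices are numerically interchangeable; the paper reaches the stated bound by asserting \(\spo(G_1^-)\geq\frac{3}{2}(n-2k^*)\), even though \(G_1^-=G-D-v_\beta\) has only \(n-2k^*-2\) vertices, so your more careful count identifies a discrepancy worth raising with the authors rather than a defect unique to your approach.

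The genuine gap in your proposal is the one you flag yourself: the closing paragraph is not a proof. A lower bound extracted from a spanning subgraph already accounts for the entire game on \(G^-\) --- every vertex, including \(v^*\), is deleted in the course of that game --- so one cannot bolt on a ``supplementary round'' involving \(v^*\) and simply add its cost to the forest bound; any extra marking of \(v^*\) must be integrated into a single Lister strategy and analyzed against all Painter replies. To beat \(\lfloor\frac{3}{2}|V(G^-)|\rfloor\) you would need either a denser spanning subgraph with a provably larger sum-color cost (for instance one exploiting \(\delta(G^-)\geq k\)), or an interactive strategy built from scratch, and quantifying either is exactly the step left open. As written, the proposal establishes \(\spo(G^-)\geq\frac{3}{2}n-3k^*-2\) but not the lemma.
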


\begin{proof}
The number of \(\beta\)-vertices can only be odd if \(|D| < 2k\). Arbitrarily choose a \(\beta\)-vertex, \(v_{\beta}\), and divide \(G^-\) into two disjoint subgraphs: \(G_1^- = G^- - v_{\beta}\) and \(G_2^-\) being the isolated vertex \(v_{\beta}\). 
Notice that \(G_1^-\) is \(k\)-connected because for the number of \(\beta\)-vertices to be odd, we must have \(|D| \leq 2k-1\), so \(G_1^-\) is formed by removing at most \(2k\) vertices from the \(3k\)-connected graph \(G\).

Now that \(v_{\beta}\) has been removed, we define \(D^-\) as the set of remaining \(\beta\)-vertices in \(G_1^-\). Note that \(|D^-| = 2k^*\) since \(|D'| = 2k^*+1\). 
 By the argument we used in Lemma~\ref{3kspan}, we can say that  \(\spo(G_1^-) \geq \frac{3}{2}(n-2k^*) = \frac{3}{2}n-3k^*.\)

Since \(G_2^-\) consists of a single vertex, \(\spo(G_2^-) = 1\). By \cite{MPWSpo}, \(\spo(G^-) \geq \spo(G_1^-) + \spo(G_2^-)\), so 

\[
\spo(G^-) \geq \frac{3}{2} n-3k^* + 1 > \frac{3}{2}n -3k^*.
\]
\end{proof}

\begin{corollary} \label{spocombo}
\(\spo{(G^-)} \geq \frac{3}{2}n - 3k \) 
\end{corollary}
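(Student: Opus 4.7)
The plan is to deduce the corollary directly from Lemmas~\ref{3kspan} and \ref{spoBodd} by splitting on the parity of $|D'|$ and, in each case, pushing the $k^*$ in the lemma bound as close to $k$ as the definition of $k^*$ allows. Since $|D'| \leq 2k$ always holds (because $D'$ contains one matched partner for each vertex of $D$, and $|D|\leq 2k$), we get a universal bound on $k^*$ in each parity case.

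First I would handle the even case. If $|D'| = 2k^*$, then $2k^* \leq 2k$, hence $k^* \leq k$, so applying Lemma~\ref{3kspan} gives
\[
\spo(G^-) \;\geq\; \tfrac{3}{2}n - 3k^* \;\geq\; \tfrac{3}{2}n - 3k.
\]
Next I would handle the odd case. If $|D'| = 2k^* + 1$, then $|D'|$ is odd and at most $2k$, so in fact $|D'| \leq 2k-1$, giving $k^* \leq k-1$. Lemma~\ref{spoBodd} then yields
\[
\spo(G^-) \;>\; \tfrac{3}{2}n - 3k^* \;\geq\; \tfrac{3}{2}n - 3(k-1) \;=\; \tfrac{3}{2}n - 3k + 3 \;>\; \tfrac{3}{2}n - 3k.
\]
In either case the desired inequality $\spo(G^-) \geq \frac{3}{2}n - 3k$ holds.

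There is essentially no obstacle here: the corollary is a packaging step that unifies the two lemmas under a single, $k^*$-free bound, and the only thing to watch is the parity observation that an odd value of $|D'|$ forces the strict inequality $|D'| \leq 2k - 1$, which was already noted inside the proof of Lemma~\ref{spoBodd}. The main content of the argument lives in the two preceding lemmas; the corollary is just the weakest uniform consequence.
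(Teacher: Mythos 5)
Your proposal is correct and follows essentially the same route as the paper: both arguments simply combine Lemma~\ref{3kspan} and Lemma~\ref{spoBodd} with the observation that \(k^* \leq k\). Your extra refinement \(k^* \leq k-1\) in the odd case is harmless but unnecessary, since the weaker bound \(k^* \leq k\) already suffices in both parities.
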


\begin{proof}
From Lemma~\ref{3kspan} and Lemma~\ref{spoBodd}, since \(k^* \leq k\), 

\[\spo(G^-) \geq \frac{3}{2}n - 3k^* \geq \frac{3}{2}n - 3k.\]   
\end{proof}

\begin{theorem} \label{MainResult}
Let \(G\) be a \(3k\)-connected graph with \(|V(G)| \geq 4k\) and with a perfect matching. Then the slow coloring number of \(G\) is bounded from below by
\[ \spo(G) \geq \frac{3n}{2} + k.\]
\end{theorem}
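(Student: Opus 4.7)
The plan is to combine the opening-move analysis with Corollary~\ref{spocombo}. Since the hypothesis $|V(G)| \geq 4k$ guarantees that the perfect matching $P$ contains at least $2k$ edges, Lister begins the game by marking the $4k$ endpoints of $2k$ matching edges, scoring $4k$ in round~$1$. Painter then deletes a maximal independent subset $D$ of those marked vertices, and because $D$ contains at most one vertex of each matching pair, $|D| \leq 2k$; the rest of the game is played on $G^- = G - D$.

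From this point on, Lister can simply adopt an optimal Lister strategy for the game on $G^-$, which by definition guarantees at least $\spo(G^-)$ further points. Corollary~\ref{spocombo} gives $\spo(G^-) \geq \frac{3n}{2} - 3k$, so the total score Lister can force is at least
\[
4k + \left(\frac{3n}{2} - 3k\right) = \frac{3n}{2} + k,
\]
which is the desired bound.

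The real difficulty has already been absorbed into Lemma~\ref{kconlem}, Lemma~\ref{3kspan}, and Lemma~\ref{spoBodd}: the key step there was applying Menger's Theorem in the $k$-connected graph $G^-$ to splice $k^*$ vertex-disjoint paths between two arbitrary halves of the $\beta$-vertices onto the surviving matching via symmetric difference, producing a spanning subgraph in which every vertex has degree $1$ or $3$ so that Theorem~\ref{PuleoSP} applies. With those lemmas in hand, the only point worth verifying when writing out the Main Result is that Lister's two-phase strategy is genuinely additive; this is immediate because Lister is free to choose the round-$2$-onward strategy after observing $D$, so no subadditivity of $\spo$ across subgames is required.
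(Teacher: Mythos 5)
Your proposal is correct and follows essentially the same route as the paper: Lister opens by marking the $4k$ endpoints of $2k$ matching edges, observes Painter's independent deletion $D$ with $|D|\leq 2k$, and then plays optimally on $G^-$, so that Corollary~\ref{spocombo} yields the total $4k + \frac{3n}{2} - 3k = \frac{3n}{2}+k$. Your closing remark that no subadditivity of $\spo$ is needed, since Lister adapts the continuation strategy after seeing $D$, matches the paper's observation that the bound holds for every Painter reply.
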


\begin{proof}
Suppose Lister begins by choosing \(4k\) vertices -- \(2k\) matching pairs. Painter replies by deleting an independent subset of the marked vertices, \(D\); so \(|D| \leq 2k\).  Note that \(G^-\) has at least \(n-2k\) vertices.

By Corollary~\ref{spocombo}, we know \(\spo(G^-) \geq \frac{3}{2}n-3k\). 

Playing an optimal strategy on \(G^-\),  the final score \(\spo\) achieved by Lister by choosing the \(4k\) vertices against this reply is bounded from below by

\[
\spo(G)  \geq \left( \frac{3}{2}n - 3k \right) +4k  = \frac{3}{2} n +k 
\]

Since this bound holds no matter which vertices Painter deletes in response to our Lister's choice, we conclude that 

\[
\spo(G)  \geq  \frac{3}{2} n +k.
\]

\end{proof}

\begin{corollary}
The inequality in Theorem~\ref{MainResult} is sharp when \(k=1\), but cannot be sharp for \(k>1\). 
\end{corollary}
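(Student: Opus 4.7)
My plan is to treat the two halves of the corollary separately.

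\textbf{Sharpness at $k=1$:} I will exhibit $K_{3,3}$ as a sharp example. Since $K_{3,3}$ is 3-connected with a perfect matching and $n=6$, Theorem~\ref{MainResult} gives $\spo(K_{3,3}) \ge 10$. For the matching upper bound $\spo(K_{3,3}) \le 10$, I plan to design a Painter strategy based on the bipartition $V(K_{3,3}) = A \sqcup B$: for any Lister move $M$, the only maximal independent subsets of $M$ are $M \cap A$ and $M \cap B$, so Painter simply picks whichever deletion minimizes the remaining game cost. Writing $f(a,b) = \spo(K_{a,b})$, I will compute $f(1,m)$, $f(2,m)$, $f(3,m)$ for $m \le 3$ recursively by checking every Lister move (``mark $a'$ vertices from $A$ and $b'$ from $B$''), and conclude $f(3,3) = 10$; in particular the maximizing first move is $a' = b' = 2$, yielding $f(3,3) = 4 + f(1,3) = 4 + 6 = 10$.

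\textbf{Non-sharpness at $k > 1$:} The strategy is to examine the proof of Theorem~\ref{MainResult} for slack. If $\spo(G) = \frac{3n}{2} + k$, then the chain of inequalities must be tight throughout, which forces (i) Painter's response $D$ to Lister's initial $4k$-mark has $|D| = 2k$, so that Lemma~\ref{spoBodd} does not apply and we are in the $k^* = k$ case of Lemma~\ref{3kspan}; and (ii) $\spo(G^-) = \frac{3(n-2k)}{2}$ exactly, so the spanning-forest lower bound used in Lemma~\ref{3kspan} is met with equality.

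The edge-count argument is the main tool to obstruct (ii). By Lemma~\ref{kconlem}, $G^-$ is $k$-connected on $n - 2k$ vertices, so $|E(G^-)| \ge k(n-2k)/2 \ge n - 2k$ whenever $k \ge 2$. The constructed spanning forest $F$ of $G^-$ (with all vertices of degree $1$ or $3$) has $|E(F)| \le n - 2k - 1$. Hence for $k \ge 2$ we have $E(F) \subsetneq E(G^-)$, so $G^-$ contains non-forest edges.

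The main obstacle I anticipate is lifting the strict containment $E(F) \subsetneq E(G^-)$ to the strict inequality $\spo(G^-) > \spo(F)$; this is not automatic, since for graphs of low connectivity (e.g.\ $C_6$) the extra edges need not increase $\spo$. My plan to bridge the gap is to pick a non-forest edge $e = uv \in E(G^-) \setminus E(F)$ and augment Lister's spanning-forest strategy on $G^-$ with a round that marks $\{u,v\}$: because $uv$ is an edge, the maximal independent subset of this pair has size at most one, giving Lister at least one bonus point beyond the forest bound. This would upgrade $\spo(G^-) \ge \frac{3(n-2k)}{2}$ to $\spo(G^-) \ge \frac{3(n-2k)}{2} + 1$, so that $\spo(G) \ge \frac{3n}{2} + k + 1 > \frac{3n}{2} + k$, contradicting sharpness when $k > 1$. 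Making the augmentation compatible with the existing forest strategy is the delicate step.
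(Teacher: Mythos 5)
Your \(k=1\) half is sound and in one respect more careful than the paper's. The paper uses the triangular prism and only checks that the Lister opening from Theorem~\ref{MainResult}, against one particular Painter reply, yields exactly \(10\); your exhaustive recursion on \(K_{3,3}\), using the fact that the maximal independent subsets of a mixed mark \(M\) are exactly \(M\cap A\) and \(M\cap B\), actually delivers the matching upper bound \(\spo(K_{3,3})\le 10\). The values check out: \(f(1,3)=6\), \(f(2,3)=8\), \(f(3,3)=10\), with the maximizing openings being \((1,1)\) and \((2,2)\). Just handle the degenerate moves \(M\subseteq A\) (or \(M\subseteq B\)), where the unique maximal independent subset is \(M\) itself.

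The \(k>1\) half has a genuine gap, and it is exactly the one you flag yourself. Your reduction is correct — tightness forces \(|D|=2k\) and \(\spo(G^-)=\tfrac{3}{2}(n-2k)\) — and your edge count correctly shows \(G^-\) is not a forest (the paper reaches the same conclusion via Whitney's theorem: \(\delta(G^-)\ge 3k-2k=k\ge 2\), so \(G^-\) has a cycle). But the bonus-round augmentation provably gains nothing. Since \(n\) is even and \(|D|=2k\), the quantity \(n-2k\) is even; marking the endpoints of a non-forest edge \(uv\) scores \(2\) and leaves \(n-2k-1\) vertices, so even in the best case you obtain \(2+\bigl\lfloor\tfrac{3}{2}(n-2k-1)\bigr\rfloor=\tfrac{3}{2}(n-2k)\), with no strict improvement — and moreover \(G^--w\) need not contain a spanning forest of the required type, so even that floor is not guaranteed. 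Your own observation that extra edges need not raise \(\spo\) (e.g.\ \(C_4\) satisfies \(\spo(C_4)=6=\tfrac{3\cdot 4}{2}\) despite containing a cycle on top of a perfect matching) shows that some genuinely new input is required here; the existence of a single surplus edge cannot suffice. The paper instead argues structurally, reading Theorem~\ref{PuleoSP} as an equality characterization: \(\spo(G^-)=\tfrac{3}{2}(n-2k)\) is claimed to force \(G^-\) to be a forest, which the cycle rules out. (Strictly speaking Theorem~\ref{PuleoSP} characterizes forests only, so the passage from \say{\(G^-\) is not a forest} to \say{strict inequality} also deserves more justification there; but for your write-up the relevant point is that the strategy-augmentation route, as proposed, cannot close the gap.)
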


\begin{proof}
We begin by noticing that for equality to be reached, two conditions must be satisfied. First, \(|D| = 2k\). This is achieved if it is possible for Painter to delete one vertex from each matching pair -- that is, that one vertex from each matching pair can form an independent set. Secondly, the resulting graph \(G^-\) must satisfy Theorem~\ref{PuleoSP}. This theorem requires \(G^-\) to be a forest, and to contain a spanning forest in which every vertex has degree 1 or 3. The following graph, called the triangular prism graph, satisfies both conditions.

    \begin{center}
    \includegraphics[scale=0.3]{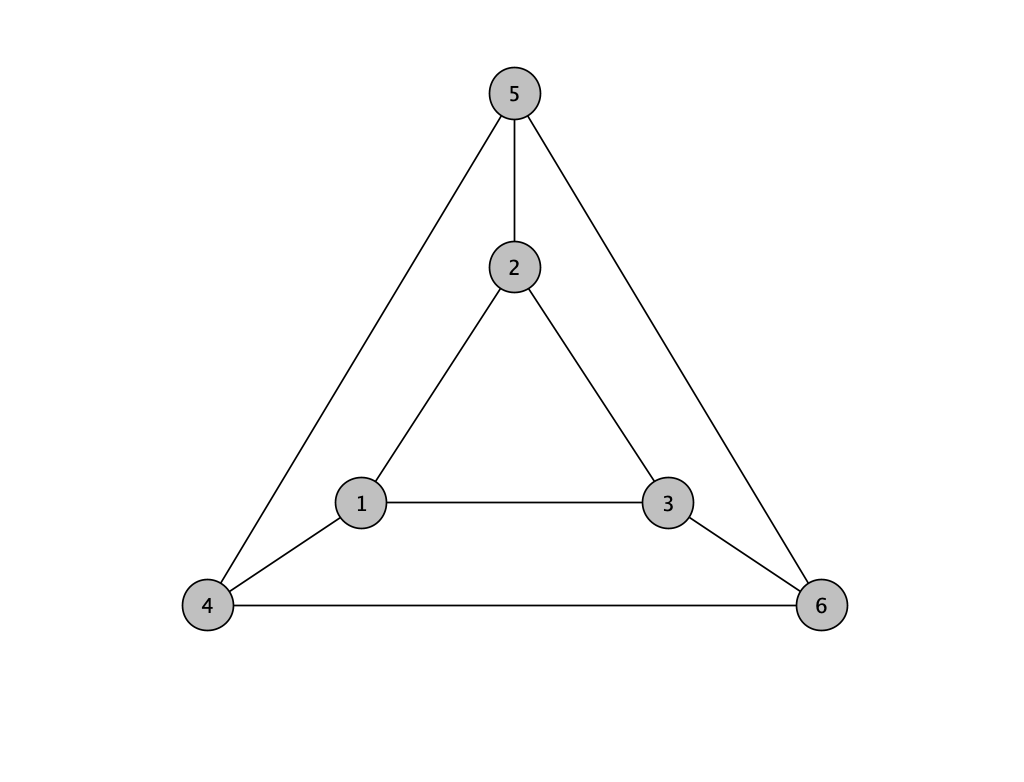}
    \label{PrismPic}
    \end{center}

Let \(G\) be the triangular prism graph. \(G\) is 3-connected, has \(6>4\) vertices, and has a perfect matching given by \(\{(1,4), (2,5), (3,6)\}\). Thus \(G\) fulfills the requirements of Theorem~\ref{MainResult}. We suppose Lister selects all 6 vertices, and Painter deletes one vertex from two matching pairs, namely \(3\) and \(4\).

    \begin{center}
    \includegraphics[scale=0.3]{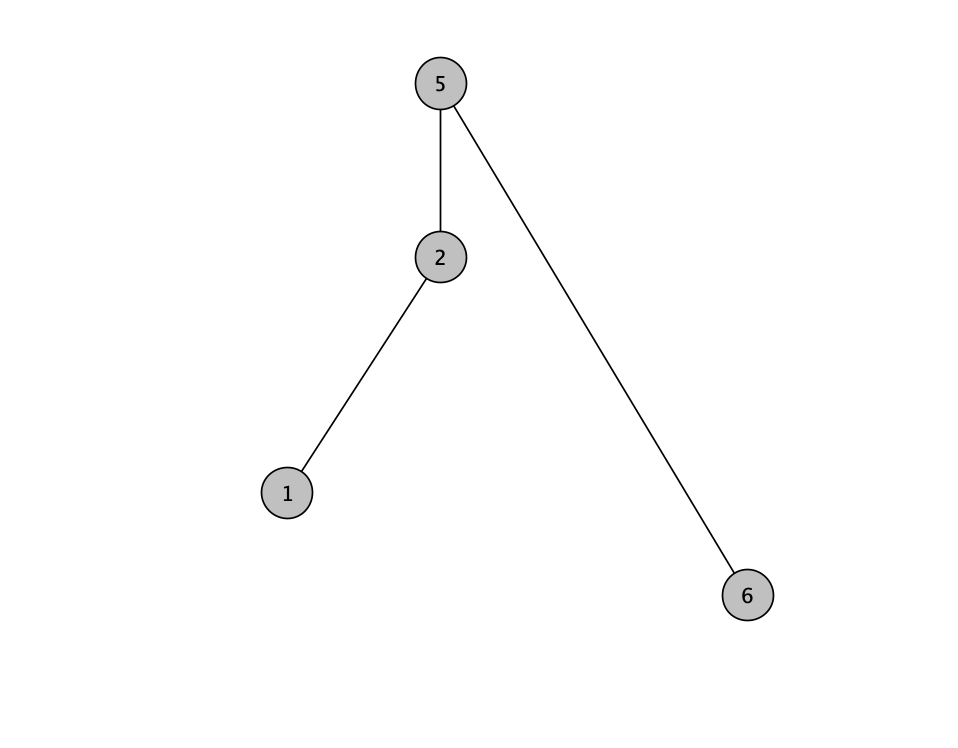}
    \label{PrismMinus}
    \end{center}

The resulting graph \(G^-\) is a forest, and contains a spanning forest in which the vertices have degree 1 and 3. Therefore by Theorem~\ref{PuleoSP}, we have that \(\spo{(G^-)} = \frac{3}{2}(4) = 6 \). Adding Lister's move to get the slow coloring number for \(G\), we have \(\spo(G) = 6+4 = 10 = \frac{3}{2}(6) + 1\).

To see that the inequality is not sharp for \(k>1\), consider Whitney's Theorem \cite{West} which states that \(\kappa(G) \leq \delta(G)\) where \(\kappa(G)\) is the vertex-connectivity and \(\delta(G)\) is the minimal degree of \(G\). If \(G\) is \(3k\)-connected then \(\delta(G) \geq 3k\). We delete at most \(2k\) vertices, so we have \(\delta(G^-) \geq 3k -2k =k\). But \(k \geq 2\), so \(\delta(G^-) \geq 2\). Therefore, \(G^-\) must have a cycle and doesn't fulfill the requirements of Theorem~\ref{PuleoSP}. Thus for \(k>1\), we can conclude \(\spo(G) > \frac{3}{2}n +k\). 
\end{proof}

\bibliographystyle{unsrt}
\bibliography{Spo3kJournal}

\end{document}